\newenvironment{tikzgraph}
  {\begin{tikzpicture}
      [vertex/.style={circle, draw=black, fill=black, inner sep=0.5pt, minimum
        size=6pt},
       edge/.style={thick},%
       medium/.style={line width=.75mm},%
       mediums/.style={line width=.5mm, decorate, decoration=snake},%
      ]\begin{scope}}
  {\end{scope}\end{tikzpicture}}
\newtheorem{fact}{Fact}
\newcommand{\sP}{\mathscr{P}}
\newcommand{\sE}{\mathscr{E}}
\theoremstyle{plain}
\newtheorem{theorem}{Theorem}[section]
\newtheorem{proposition}[theorem]{Proposition}
\newtheorem{conj}[theorem]{Conjecture}
\theoremstyle{definition}
\renewcommand{\le}{\leqslant}
\renewcommand{\ge}{\geqslant}
\newcommand{\abs}[1]{\left\lvert#1\right\rvert}
\newcommand{\eintervalle}[4]{\mathopen{#1}#2,\dotsc,#3\mathclose{#4}}
\newcommand{\icc}[2]{\eintervalle{\{}{#1}{#2}{\}}}
\title{Variations on the Petersen Colouring Conjecture}
\author{François Pirot}
\address{\'Equipe Orpailleur, LORIA (Université de Lorraine, C.N.R.S., INRIA),
Vandœuvre-lès-Nancy, France and
Department of Mathematics, Radboud University Nijmegen, Netherlands.}
\email{francois.pirot@loria.fr}
\author{Jean-S\'ebastien Sereni}
\address{Centre National de la Recherche Scientifique (ICube, CSTB), Strasbourg, France.}
\email{sereni@kam.mff.cuni.cz}
\author{Riste Škrekovski}
\address{Faculty of Information Studies, Novo Mesto,
Faculty of Mathematics and Physics, University of Ljubljana, and
FAMNIT, University of Primorska, Koper, Slovenia.}
\email{skrekovski@gmail.com}
\thanks{This work was partially supported by P.H.C. Proteus~[37455VB]; ARRS~[BI-FR-PROTEUS/17-18-009, P1-0383].}
\date{\today}
\begin{document}

\def\expanded{0}

\begin{abstract}
    The Petersen colouring conjecture states that every bridgeless cubic graph
    admits an edge-colouring with~$5$ colours such that for every edge~$e$, the
    set of colours assigned to the edges adjacent to~$e$ has cardinality
    either~$2$ or~$4$, but not~$3$.  We prove that every bridgeless cubic
    graph~$G$ admits an edge-colouring with~$4$ colours such that at
    most~$\frac45\cdot\abs{V(G)}$ edges do not satisfy the above condition.
    This bound is tight and the Petersen graph is the only connected graph for which
    the bound cannot be decreased.  We obtain such a $4$-edge-colouring by
    using a carefully chosen subset of edges of a perfect matching, and the
    analysis relies on a simple discharging procedure with essentially no
    reductions and very few rules.
\end{abstract}

\maketitle

\section{Introduction}
At the ninth British Combinatorial Conference in 1983, Fouquet and
Jolivet~\cite{FoJo83} introduced strong edge-colourings of cubic graphs. This
notion was further studied by Jaeger, who formulated a conjecture which is,
arguably, one of the most challenging conjecture in graph theory. Proving
Jaeger's conjecture to be true would have tremendous consequences, such as
confirming the Cycle double cover conjecture, the Berge-Fulkerson conjecture
and the nowhere-zero $5$-flow conjecture.

For any integer~$k\ge3$, consider
a $k$-edge-colouring of a cubic graph~$G=(V,E)$, that is, a mapping~$f\colon
E\to\icc{1}{k}$ such that $f(e)\neq f(e')$ for every two edges~$e$ and~$e'$
that share a vertex.  For an edge~$e\in E$, let~$\sE(e)$ be the set of four
edges adjacent to~$e$. The edge~$e$ is \emph{rich} if $\abs{f(\sE(e))}=4$,
while it is \emph{poor} if $\abs{f\left(\sE(e)\right)}=2$. The
edge-colouring~$f$ is \emph{normal} if every edge is either rich or poor.
The Petersen colouring conjecture reads as follows.
\begin{conj}[The Petersen coloring conjecture---Jaeger, 1985]
Every cubic bridgeless graph admits a normal $5$-edge-colouring.
\end{conj}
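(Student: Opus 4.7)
The plan is to prove the conjecture by induction on $\abs{V(G)}$, using the standard equivalent reformulation due to Jaeger: a bridgeless cubic graph~$G$ admits a normal $5$-edge-colouring if and only if its edges can be coloured by the edges of the Petersen graph~$P$ in such a way that at every vertex of~$G$ the three colours appearing are the three edges of~$P$ incident to a common vertex. The first step is to reduce to the case of cyclically $4$-edge-connected cubic graphs. A $2$-edge-cut~$\{e_1,e_2\}$ splits $G$ into two smaller bridgeless cubic graphs after suppressing the cut, and a normal colouring extends through the cut provided $e_1$ and $e_2$ receive the same colour on each side. A nontrivial $3$-edge-cut is handled analogously: contract each side to a vertex, apply induction, and match the three colours appearing at the cut using the vertex-transitivity of~$P$.

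For the remaining case of cyclically $4$-edge-connected cubic graphs, I would exploit the existence of a perfect matching~$M$ (given by Petersen's theorem) and consider the $2$-factor $G\setminus M$. When $G$ is $3$-edge-colourable, the $2$-factor consists of even cycles, the colouring problem decouples, and normality can be enforced directly using three of the five colours. The hard case is that of snarks, in which the $2$-factor must contain odd cycles; here the edge-colouring by edges of~$P$ has to traverse these odd cycles consistently with the structure of the $5$-cycles of~$P$ itself. One concrete strategy is to start from a random perfect matching together with the near-normal $4$-edge-colouring produced by the argument of this paper, promote it to a $5$-colouring by splitting one colour class, and then iteratively recolour the $\bigo{\abs{V(G)}}$ offending edges along short alternating augmenting structures inside the odd cycles.

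The main obstacle, and the reason this statement has stood open since 1985, is that the conjecture implies the Cycle Double Cover Conjecture, the Berge--Fulkerson Conjecture, and the nowhere-zero $5$-flow Conjecture, each of which has itself resisted decades of attack. In particular, the snark family contains infinitely many pairwise non-isomorphic graphs (the Blanu\v{s}a snarks, Loupekine snarks, flower snarks, and many more), each presenting a distinct local obstruction, and no single reducible configuration is known to handle all of them uniformly. The critical difficulty in the proposed augmentation approach is that each local correction may introduce new non-normal edges elsewhere, so ensuring convergence seems to require a new potential-function or entropy argument going beyond what the discharging techniques of this paper can deliver; overcoming this is where any plausible proof would have to break genuinely new ground.
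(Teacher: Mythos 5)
The statement you are asked about is not a theorem of this paper at all: it is Jaeger's Petersen colouring conjecture, which the paper records as an open problem and explicitly does not prove. The paper's actual contribution is strictly weaker --- a $4$-edge-colouring of any connected bridgeless cubic graph with at most $\frac45\cdot\abs{V(G)}$ medium edges --- so there is no ``paper's own proof'' for your proposal to match, and any purported proof of the conjecture would need to be a major breakthrough (it would imply the Cycle Double Cover, Berge--Fulkerson and nowhere-zero $5$-flow conjectures, as you correctly note).

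Your proposal, judged on its own terms, is not a proof but a programme, and the central step is missing. The reductions across $2$- and $3$-edge-cuts are standard and fine, and restricting to the $3$-edge-colourable case is easy (a $3$-edge-colouring is already normal, every edge being poor). But the entire difficulty lives in the cyclically $4$-edge-connected snark case, and there your argument consists of the wish to ``iteratively recolour the $\bigo{\abs{V(G)}}$ offending edges along short alternating augmenting structures'' starting from the near-normal $4$-edge-colouring of this paper. No such augmenting structure is defined, no invariant or potential function is given that would show the number of medium edges decreases, and indeed the paper's own lower bound shows the obstruction is real: for the Petersen graph itself every $4$-edge-colouring has at least $8$ medium edges, so no amount of local recolouring within four colours can converge, and once a fifth colour is introduced you have no lemma controlling how a local fix propagates to the second and third neighbourhoods of the recoloured edge (normality is a distance-two condition, so each change can create new medium edges two steps away). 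You acknowledge this yourself in the final paragraph; that acknowledgement is accurate, but it also means the proposal does not establish the conjecture, which remains open.
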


Now it is maybe a good time to explain the links with the ubiquitous Petersen
graph~$\sP$. A \emph{Petersen colouring} of a cubic graph~$G=(V,E)$ is a
mapping~$g$ that associates to each edge of~$G$ an edge of~$\sP$ such that if
two edges~$e$ and~$e'$ of~$G$ share a vertex, then so do the edges~$g(e)$
and~$g(e')$ of~$\sP$. As observed by Jaeger~\cite{Jae85}, normal colourings and
Petersen colourings of cubic graphs are in one-to-one correspondence.

Indeed, as is well known, the Petersen graph~$\sP$ can be seen as the Kneser
graph with parameters~$5$ and~$2$, defined as follows: the vertices are in
one-to-one correspondence with the $2$-element subsets of~$\icc{1}{5}$ and two
vertices are adjacent if and only if the corresponding subset are disjoint.
With this definition in mind, we can label every edge~$uv$ of~$\sP$ by the
unique integer~$\ell(uv)\in\icc{1}{5}$ that does not belong to~$X_u\cup X_v$,
where~$X_w$ is the $2$-element subset of~$\icc{1}{5}$ that corresponds
to~$w$, for every vertex~$w$ of~$\sP$.
Notice that if~$g$ is a Petersen colouring of a cubic graph~$G=(V,E)$, then
$f\colon E\to\icc{1}{5}$ defined by~$f(e)\coloneqq\ell(g(e))$ is a normal
colouring of~$G$.

Conversely, assume that~$f$ is a normal $5$-edge-colouring of a cubic
graph~$G=(V,E)$. Keeping in mind the labelling~$\ell$ of the edges of~$\sP$ given above,
we define the mapping~$g\colon E\to E(\sP)$ as follows.
For each edge~$e=uv\in E$, we define~$g(e)$ to be the edge~$e'$ of~$\sP$ such that
first~$\ell(e')=f(e)$, and second~$e'$ is incident to the vertex~$w\in V(\sP)$ such that
the three colours assigned by~$f$ to the edges of~$G$ incident to~$u$ are the elements
of~$X_w\cup\{f(e)\}$.
A straightforward checking ensures that~$g$ is a Petersen colouring of~$G$.  We
just proved the following equivalence, which was first established by
Jaeger~\cite{Jae85}.
\begin{proposition}
  Let~$G$ be a cubic graph. Then,~$G$ admits a normal colouring if and only
  if~$G$ admits a Petersen colouring.
\end{proposition}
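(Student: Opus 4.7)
The plan is to spell out both directions around the edge-labelling~$\ell$ of~$\sP$ described in the excerpt, letting the Kneser structure of~$\sP$ do all the combinatorial work. First I would check that~$\ell$ is itself a proper edge-colouring of~$\sP$: at a vertex~$w$, the three edges $ww'$ reach the three $2$-subsets~$X_{w'}$ disjoint from~$X_w$, and each label $\ell(ww')$ is the unique element of $\icc{1}{5}\setminus(X_w\cup X_{w'})$, so the three labels at~$w$ are exactly the three distinct elements of $\icc{1}{5}\setminus X_w$.

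For the direction \emph{Petersen colouring $\Rightarrow$ normal colouring}, given~$g$ I would set $f(e)\coloneqq\ell(g(e))$. Since any three edges of~$G$ sharing a vertex map to three pairwise-adjacent edges of~$\sP$ and~$\sP$ is triangle-free, these three images must share a common vertex of~$\sP$ and hence receive three distinct labels under~$\ell$; this gives properness of~$f$. For normality, consider~$e=uv$ with $g(e)=w_1w_2$: the two other edges at~$u$ map to edges of~$\sP$ each adjacent to~$g(e)$, and, by the same triangle-freeness argument, they share an endpoint of~$g(e)$ with it, say~$w_1$. Likewise at~$v$ the two edges share an endpoint $w_{i_v}\in\{w_1,w_2\}$. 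If $w_{i_v}=w_2$, the four labels assigned by~$f$ to $\sE(e)$ together with~$f(e)$ exhaust the five labels at~$w_1$ and~$w_2$, so $\abs{f(\sE(e))}=4$ and~$e$ is rich; if $w_{i_v}=w_1$, the two pairs coincide, so $\abs{f(\sE(e))}=2$ and~$e$ is poor.

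For the converse I would follow the construction from the excerpt: for~$e=uv$, set $X_w\coloneqq\{\text{three colours of $f$ at $u$}\}\setminus\{f(e)\}$, which is a $2$-subset, and let $g(e)$ be the edge incident to~$w$ with label~$f(e)$. The delicate step is consistency: the same recipe applied from~$v$ must yield the same edge of~$\sP$. In the rich case the two $2$-subsets obtained at~$u$ and~$v$ are disjoint, so adjacent in~$\sP$, and~$g(e)$ is the unique edge between them; in the poor case they coincide, and both recipes pick the unique edge incident to that common vertex with label~$f(e)$. Had $\abs{f(\sE(e))}$ been~$3$, the two constructed $2$-subsets would be incompatible, showing why the rich-or-poor dichotomy is exactly what is required; this consistency check is the main obstacle in the whole argument. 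It remains to observe that the three edges at any vertex~$u$ of~$G$ all map to edges incident to the common vertex $W_u\coloneqq\icc{1}{5}\setminus\{\text{three colours at $u$}\}$ of~$\sP$, so adjacent edges in~$G$ map to adjacent edges in~$\sP$, and~$g$ is a Petersen colouring.
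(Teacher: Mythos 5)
Your proposal is correct and follows the paper's argument: you use the same Kneser labelling~$\ell$, derive the normal colouring as $f=\ell\circ g$, and construct the Petersen colouring from a normal colouring exactly as in the paper (the edge with label~$f(e)$ at the vertex~$w$ with $X_w=\{\text{colours at }u\}\setminus\{f(e)\}$). The only difference is that you write out the "straightforward checking" the paper omits — the rich/poor case analysis, the $u$-versus-$v$ consistency, and the common vertex $W_u$ — which is a welcome but not distinct elaboration of the same proof.
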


Notice that a $3$-edge-colouring of a (connected) cubic graph~$G$ is precisely a
normal colouring in which every edge is poor.  Fouquet and
Jolivet~\cite{FoJo83} coined the term \emph{strong colouring} to define an
edge-colouring in which every edge is rich. This corresponds precisely to an
edge-colouring ``at distance~$2$'' or, in other words, to a vertex-colouring of
the square of the line graph of~$G$.  The unique normal colouring of the
Petersen graph is a strong colouring.

Despite original approaches~\cite{MaSk05,NeSa08,Sam11}, few progress has been made on the
Petersen colouring conjecture: ways to infirm it remain elusive as possible
counter-examples must be snarks, that is bridgeless cubic graphs that are not
$3$-edge-colourable (the ones we know are usually obtained from well-structured
graph operations, for which the Petersen colouring conjecture can be verified~\cite{Bil15,HaSt14}),
and confirming the conjecture is expected to be a difficult task since as
reported earlier this would confirm several difficult and most researched graph
conjectures.

In view of the difficulty of the question, it is natural to ask for weaker versions
of the conjecture. Because a strong colouring is normal, we know that every cubic
graph admits a normal colouring using at most~$10$ colours: indeed Andersen~\cite{And92}
and, independently, Horák, He and Totter~\cite{HHT93} established this statement (which
confirms, for the particular case of graphs with maximum degree~$3$,
a conjecture of Erd\H{o}s and Nešetřil formulated in~1985 during a seminar in Prague).
Further, it has been noted before~\cite{Bil15} that Seymour's $8$-flow theorem provides,
for any bridgeless cubic graph, a normal $7$-edge-colouring. To the best of our knowledge,
whether a normal $6$-edge-colouring can be found for any such graph is still an open question.
A line of study is then to find an edge-colouring that is normal on a large proportion of
the graph, which we formalise in the next subsection. Before that, we end this part
with a remark.

As mentioned earlier, a $3$-edge-colourable graph~$G$ always has a Petersen
colouring: let~$e_1$, $e_2$ and~$e_3$ be the three edges incident to an
arbitrary vertex~$v$ of the Petersen graph. Label~$e_i$ by~$\ell(e_i)\coloneqq
i$ for each~$i\in\{1,2,3\}$.  If~$c$ is a $3$-edge-colouring of~$G$, then
defining~$f\colon E(G)\to E(\sP)$ by~$f(e')\coloneqq e_{c(e')}$ yields a
Petersen colouring of~$G$. This Petersen colouring is, in some sense, trivial:
it only uses the incidences at a single vertex of the Petersen graph.  This is
equivalent to saying that the original graph admits a $3$-edge-colouring.  A
fact that seems worth pointing out, however, is that Petersen colourings of
bridgeless cubic graphs are either ``trivial'' (and hence the graph admits a
$3$-edge-colouring) or surjective.

\subsection{The rich, the poor and the medium.}
Given an edge-colouring of a cubic graph~$G$, define an edge~$e$ to be \emph{medium}
if it is neither rich nor poor. Since the Petersen colouring conjecture states that
every bridgeless cubic graphs admits a $5$-edge-colouring such that no edge is medium,
it seems interesting to investigate the minimum number of medium edges in edge-colourings
of bridgeless cubic graphs. As observed by Bílková~\cite{Bil15}*{p.~9}, Petersen's perfect matching theorem
combined with Vizing's edge-colouring theorem (and some further analysis if the
graph has cycles of length less than~$5$) directly yield for every bridgeless
cubic graph a $5$-edge-colouring such that at least one third of the edges are rich or poor.
This lower bound was improved~\cite{Bil15}*{Theorem~3.2} to two thirds of the edges for cubic graphs having a $2$-factor
consisting of two cycles of the same length --- the class of ``generalised prisms'' ---
and to roughly half the edges in graphs with no short cycles~\cite{Bil15}*{Theorem~3.6}.

Ways how poor, rich and medium edges combine in edge-colourings looks
intriguing. We already pointed out that an edge-colouring with poor edges only
is actually a $3$-edge-colouring. Oppositely, an edge-colouring with rich edges
only is a strong colouring. Notice that every normal $5$-edge-colouring of the Petersen
graph actually contains no poor edge: it is thus a strong colouring.

We consider $4$-edge-colourings and prove the following result.

\begin{theorem}\label{thm-main}
Every connected cubic bridgeless graph~$G$ admits a $4$-edge-colouring such that
    at most~$\frac45\cdot\abs{V(G)}$ edges are neither rich nor poor.
Furthermore, no~$4$-edge-colouring of~$G$ yields less medium edges if and only
if~$G$ is the Petersen graph.
\end{theorem}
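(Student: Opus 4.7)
If $G$ admits a proper $3$-edge-colouring, then the statement holds trivially, since such a colouring is also a $4$-edge-colouring in which every edge is poor. We therefore assume that $G$ is a snark. By Petersen's theorem, $G$ admits a perfect matching $M$, and the complement $F \coloneqq G - M$ is a $2$-factor; since $G$ is not $3$-edge-colourable, $F$ contains a positive (hence at least~$2$) even number of odd cycles. Our starting $4$-edge-colouring assigns colour~$4$ to every edge of $M$ and properly $3$-edge-colours $F$ using colours in $\{1,2,3\}$. A central observation is that in any proper $4$-edge-colouring of a cubic graph no edge can be rich, since each edge has four neighbours but only three available colours. Hence every edge is either poor or medium, and writing $\tau(v)\subseteq\{1,2,3,4\}$ for the (necessarily $3$-element) set of colours appearing at~$v$, an edge~$uv$ is medium if and only if $\tau(u)\neq\tau(v)$. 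Minimising medium edges thus reduces to choosing $M$ and the $3$-edge-colouring of $F$ so that the type partition of $V(G)$ has as few bichromatic edges as possible.

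The $3$-edge-colouring of $F$ is then chosen as follows. Each even cycle is $2$-edge-coloured alternately with $\{1,2\}$, so that every one of its vertices shares the type $\{1,2,4\}$. Each odd cycle of length~$\ell$ is properly $3$-edge-coloured in such a way that the resulting vertex types form exactly three contiguous cyclic arcs, of odd lengths summing to~$\ell$; one checks that this yields exactly three medium edges inside each odd cycle (precisely those between consecutive arcs) and leaves $\ell-2$ of its vertices in a common \emph{dominant} type, which we align with the even-cycle type $\{1,2,4\}$. The \emph{carefully chosen subset of edges of a perfect matching} mentioned in the abstract is then used to synchronise the positions of the two non-dominant vertices of each odd cycle with those of the neighbouring cycles, so that as many matching edges as possible connect two vertices of the same type (hence are poor rather than medium).

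The bound is established by a discharging procedure: assign to every vertex the initial charge~$\tfrac{4}{5}$, for a total of $\tfrac{4}{5}\abs{V(G)}$. A short list of local rules redistributes this total so that each medium edge receives at least one unit of charge, at which point the desired bound follows by conservation. The main obstacle will be to verify that the colouring described above admits a feasible discharging: this is where the careful selection of $M$ (and of the synchronising subset of $M$) is crucial, because it controls how many medium edges cluster around the two exceptional vertices of each odd cycle. For the tightness and uniqueness claim, we trace the equality case through the discharging. Saturation should force every odd cycle of $F$ to have length exactly~$5$, each exceptional vertex to be matched to a non-dominant vertex of another odd $5$-cycle, and the local matching pattern between two such $5$-cycles to be rigidly determined, leaving the Petersen graph as the unique extremal configuration.
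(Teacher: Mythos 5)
Your general strategy is the same as the paper's (colour a perfect matching~$M$ with colour~$4$, colour the $2$-factor~$F$ with~$\{1,2,3\}$ so that each odd cycle carries exactly three medium edges, then use a carefully chosen set of matching edges plus discharging), and your opening observations are sound --- in particular the remark that no edge of a cubic graph can be rich under a proper $4$-edge-colouring, so poor versus medium is decided by whether the two colour triples at the end-vertices coincide. However, the proposal stops exactly where the actual work begins, and you say so yourself (``The main obstacle will be to verify that the colouring described above admits a feasible discharging''). The whole content of the theorem lies in that verification. The paper makes it work by defining an \emph{edge-selection}~$S\subseteq M$ with precise combinatorial constraints (each edge of~$S$ joins two \emph{distinct odd} cycles of~$F$, each cycle meets at most two edges of~$S$, and when it meets two they are consecutive along the cycle), chosen extremally (maximum~$\abs{S}$, then maximum number of $S$-degree-$2$ cycles); the colour-$3$ edge of each odd cycle is placed so as to be adjacent to all edges of~$S$ at that cycle, and the residual $\{1,2\}$-colouring is synchronised along each $S$-component, so that at most one edge of~$S$ per odd-cycle component is medium. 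The discharging rules (R0)--(R4) and their verification then rest on exchange arguments against the extremal choice of~$S$: an odd cycle of $S$-degree~$0$ and length~$5$ has charge~$5>\frac45\cdot 5$ and must ship charge to its matched neighbours, and showing this never overloads another cycle uses repeated modifications of~$S$ that would otherwise contradict its maximality. Your sketch of ``assign $\frac45$ to each vertex and let local rules feed each medium edge one unit'' names none of these rules and gives no reason the books balance; without them there is no proof of the $\frac45\abs{V(G)}$ bound. A second concrete omission: triangles. A triangle in~$F$ carries $3$ medium edges on only $3$ vertices, already exceeding $\frac45\cdot 3$, which is why the paper first eliminates multi-edges and triangles by induction (working with multigraphs) before any discharging; your argument, applied directly to an arbitrary snark, has no mechanism to absorb this local deficit.

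The ``furthermore'' part is also not established. The paper obtains strictness for $G$ not the Petersen graph by invoking the theorem of DeVos, Mkrtchyan and Petrosyan that every connected bridgeless cubic graph other than the Petersen graph has a $2$-factor containing a cycle of length different from~$5$, and then observing that the discharging yields a strict inequality whenever~$F$ has such a cycle. Your plan --- trace equality through the discharging and argue that ``the local matching pattern between two such $5$-cycles is rigidly determined, leaving the Petersen graph as the unique extremal configuration'' --- is a hope, not an argument: even granting a fully worked-out discharging, equality would only tell you about the particular $2$-factor you chose, and deducing $G\cong\sP$ from ``the chosen $2$-factor consists of $5$-cycles with a rigid matching pattern'' is essentially the cited nontrivial result, which you would have to reprove. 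Finally, the converse direction (that every $4$-edge-colouring of the Petersen graph has at least $8$ medium edges, so the bound is attained there) is not addressed in the proposal at all.
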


Since an $n$-vertex cubic graph has~$\frac{3n}{2}$ edges,
Theorem~\ref{thm-main} ensures that every bridgeless cubic graph~$G$ admits a
$4$-edge-colouring containing at most~$\frac{8}{15}\cdot\abs{E(G)}$ medium
edges. This bound cannot be improved in general, since the Petersen graph
has~$15$ edges and each of its $4$-edge-colourings yields at least~$8$
medium edges.

\begin{figure}[!th]
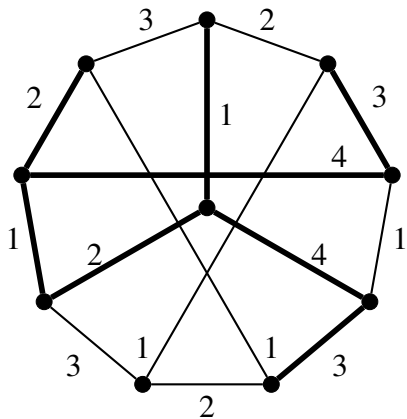
 
    \begin{center}
\begin{tikzgraph}
 \foreach \i in {0,1,...,8}
    \draw (90+40*\i:2.5cm) node[vertex] (a\i) {};
    \draw (0,0) node[vertex] (a10) {};
    \draw[edge] (a0)--(a1) node[above,midway] {$3$} --(a2) node[above left,midway] {$2$} --(a3) node[left,midway] {$1$} --(a4) node[below left,midway] {$3$} --(a5) node[below,midway] {$2$} --(a6) node[below right,midway] {$3$} --(a7) node[right, midway] {$1$} --(a8) node[above right,midway] {$3$} --(a0) node[above,midway] {$2$};
    \draw[edge] (a3)--(a10) node[left=1.5mm,midway] {$2$} --(a0) node[right,midway] {$1$};
    \draw[edge] (a6)--(a10) node[right=1.5mm,midway] {$4$};
    \draw[edge] (a8)--(a4) node[above=2mm] {$1$};
    \draw[edge] (a1)--(a5) node[above=2mm] {$1$};
    \draw[edge] (a2)--(a7) node[xshift=-7mm,yshift=2.5mm] {$4$};
\draw[medium] (a1)--(a2)--(a3)--(a10)--(a0);
\draw[medium] (a8)--(a7);
\draw[medium] (a2)--(a7);
\draw[medium] (a10)--(a6);
\draw[medium] (a5)--(a6);
\end{tikzgraph}
\end{center}
    \caption{A $4$-edge-colouring of the Petersen graph with exactly~$8$ medium edges, drawn thicker.}
\end{figure}

\section{Proof of \texorpdfstring{Theorem~\ref{thm-main}}{Theorem 1.3}}

We demonstrate the upper bound: every bridgeless cubic graph~$G$
admits a $4$-edge-colouring such that at most~$\frac45\cdot\abs{V(G)}$
edges are medium. While developing the proof, we shall see that the only
case where the bound must be attained is if~$G$ is the Petersen graph.
We proceed by induction on the number of vertices of~$G$.
The induction yields the conclusion in a standard way if~$G$ has a triangle, and
we thus first deal with this case.  To make the argument smoother, we actually
prove the result for (loopless) bridgeless cubic multi-graphs.
For instance any $4$-edge-colouring of a triple edge between two vertices
yields three poor edges. Similarly, if a cubic graph~$G$ contains two vertices
with exactly two parallel edges between them, then in any $4$-edge-colouring
of~$G$ either both edges are poor or both edges are medium.

As reported earlier, every $3$-edge-colouring of a cubic graph contains only poor edges,
and hence the statement of Theorem~\ref{thm-main} is correct if~$G$
admits a $3$-edge-colouring, and hence in particular if~$\abs{V(G)}=2$.
We hence consider a connected bridgeless cubic multi-graph~$G$
that admits no $3$-edge-colouring, and we set~$n\coloneqq\abs{V(G)}$
(so~$n\ge10$). Our first two arguments are standard and well known to people used
to graph colouring but they are included for completeness.

We use induction to prove the statement if~$G$ contains a multi-edge.
Indeed, suppose that~$e_1$ and~$e_2$ are two different edges with end-vertices~$v_1$ and~$v_2$.
For each~$i\in\{1,2\}$, let~$u_i$ be the neighbour of~$v_i$ different from~$v_{3-i}$.
Since~$n>2$ and because~$G$ is bridgeless,~$u_1\neq u_2$. Let~$G'$ be the bridgeless
cubic multi-graph obtained from~$G-\{v_1,v_2\}$ by adding a new edge~$e'$ between~$u_1$ and~$u_2$.
The induction hypothesis ensures that~$G'$ admits a $4$-edge-colouring~$c'$ yielding
at most~$\frac45\cdot(n-2)$ medium edges. It is straightforward to deduce from~$c'$ a $4$-edge-colouring~$c$
of~$G$ with no more medium edges, which thus prove the statement (including ``the furthermore part'')
if~$G$ contains a multi-edge. 
\if\expanded1
Without loss of generality, we may assume that~$c'(e')=1$, the other
two edges incident to~$u_1$ are coloured~$2$ and~$3$, and one of the other edges incident to~$u_2$
is coloured~$2$. There are two possibilities for the third edge incident to~$u_2$: it is coloured
either~$3$ or~$4$ (see Figure~\ref{fig-multi}.)

\begin{figure}[!th] 
\begin{tikzgraph}
\node[vertex] (v1) at (5,1) {};
\draw[above] (v1) node {$v_1$};
\node[vertex] (v2) at (7,1) {};
\draw[above] (v2) node {$v_2$};
\draw[thick,bend right]  (v1) edge (v2);
\draw[thick,bend left]  (v1) edge (v2);
\node[vertex] (u1) at (5,-.5) {};
\node[vertex] (u2) at (7,-.5) {};
\draw[right] (u1) node {$u_1$};
\draw[left] (u2) node {$u_2$};
\draw[thick] (v1)--(u1)--++(-.5,-.75);
\draw[thick] (v1)--(u1)--++(.5,-.75);
\draw[thick] (v2)--(u2)--++(-.5,-.75);
\draw[thick] (v2)--(u2)--++(.5,-.75);
    \path (v1)--(v2) node[above=4mm,midway] {$2$};
    \path (v1)--(v2) node[below=4mm,midway] {$3$};
    \path (v1)--(u1) node[left,midway] {$1$};
    \path (v2)--(u2) node[right,midway] {$1$};
    \path (u1)--++(-.3,-.35) node[left] {$3$};
    \path (u1)--++(.3,-.35) node[right] {$2$};
    \path (u2)--++(-.3,-.35) node[left] {$2$};
    \path (u2)--++(.3,-.35) node[right] {$3$ or~$4$};
\draw[very thick,<->] (1.5,-.5)--(2.5,-.5);
\node[vertex] (uu1) at (-3,-.5) {};
\node[vertex] (uu2) at (-1,-.5) {};
    \draw (uu1) node[above] {$u_1$};
    \draw (uu2) node[above] {$u_2$};
\draw[thick] (uu1)--(uu2);
\draw[thick] (uu1)--++(-.5,-.75);
\draw[thick] (uu1)--++(.5,-.75);
\draw[thick] (uu2)--++(-.5,-.75);
\draw[thick] (uu2)--++(.5,-.75);
    \path (uu1)--(uu2) node[above,midway] {$1$};
    \path (uu1)--++(-.3,-.35) node[left] {$3$};
    \path (uu1)--++(.3,-.35) node[right] {$2$};
    \path (uu2)--++(-.3,-.35) node[left] {$2$};
    \path (uu2)--++(.3,-.35) node[right] {$3$ or~$4$};
\end{tikzgraph}
    \caption{Standard reduction of multi-edges and of triangles: on the left
picture, the edge coloured~$2$ incident to~$u_1$ could be the same as that
coloured~$2$ incident to~$u_2$.}\label{fig-multi}
\end{figure}

Let~$c\colon E(G)\to\icc{1}{4}$ be defined by~$c(e)\coloneqq c'(e)$ if~$e\in E(G)\cap E(G')$,
$c(v_1u_1)\coloneqq c(v_2u_2)\coloneqq 1$, $c(e_1)\coloneqq 2$ and~$c(e_2)\coloneqq 3$.
Then~$c$ is a $4$-edge-colouring of~$G$ that yields no more medium edges than~$c'$ in~$G'$.
\else
One obtains a $4$-edge-colouring~$c$ of~$G$ by setting~$c(e)=c'(e)$ if~$e\in E(G)\cap E(G')$
and $c(v_iu_i)\coloneqq c'(e')$ for each~$i\in\{1,2\}$,
and letting~$c(e_1)$ and~$c(e_2)$ be the two colours of the two edges of~$G'$ incident to~$u_1$ that are
different from~$e'$. The obtained $4$-edge-colouring~$c$ of~$G$ yields at most~$\frac45\cdot(n-2)$ medium
edges, which is less than~$\frac45\cdot n$.
\fi
\noindent
Consequently, we may assume that~$G$ is simple.

We now use induction to prove the statement if~$G$ contains a triangle~$v_0v_1v_2$. Indeed,
we then define~$G'$ to be the multi-graph obtained from~$G$ by contracting~$v_0$, $v_1$ and~$v_2$
into a single vertex~$x$. It follows that~$G'$ is a bridgeless and cubic multi-graph with~$n-2$ vertices,
and therefore the induction hypothesis yields that~$G'$ admits a $4$-edge-colouring~$c'$ yielding at
most~$\frac45\cdot(n-2)$ medium edges.
\if\expanded1
This colouring~$c'$ can be extended into a $4$-edge-colouring~$c$
of~$G$ as follows. Note that the three edges of~$G'$ incident to~$x$ must have different colours under~$c'$.
First set~$c(e)\coloneqq c'(e)$ if~$e\in E(G)\cap E(G')$. Second for each~$i\in\{0,1,2\}$
set~$c(v_iv_{i+1})\coloneqq c(v_iu_i)$ where~$u_i$ is the neighbour of~$v_i$ not in~$\{v_{i+1},v_{i+2}\}$
and the indices are modulo~$3$. (See Figure~\ref{fig-triangle}.)
The $4$-edge-colouring~$c$ of~$G$ has no more medium edges than~$c'$
has in~$G'$.

\begin{figure}[!th] 
    \begin{center}
\begin{tikzgraph}
\begin{scope}[xshift=-5cm]
\draw (0,0) node[vertex] (x) {};
\draw[edge] (x)--++(0,.75cm);
\draw[edge] (x)--++(-.5,-.65);
\draw[edge] (x)--++(.5,-.65);
\path (x)--++(0,.6) node[left] {$1$};
\path (x)--++(-.3,-.4) node[left] {$2$};
\path (x)--++(.3,-.4) node[right] {$3$};
\draw[very thick,<->] (1.5,0)--(2.5,0);
\end{scope}
\begin{scope}[rotate=90]
 \foreach \x/\y in {0/0,120/1,240/2}{
 \node[vertex] (\y) at (canvas polar cs: radius=1cm,angle=\x){};
 }
\end{scope}
\draw[edge] (0)--(1) node[midway,left] {$3$} --(2) node[midway,below] {$1$} --(0) node[midway,right] {$2$} --++(0,.75cm) node [right,midway] {$1$};
\draw[edge] (1)--++(-.5,-.65) node[midway,left] {$2$};
\draw[edge] (2)--++(.5,-.65) node[midway,right] {$3$};
\end{tikzgraph}
    \end{center}
    \caption{The induction step if~$G$ contains a triangle.}\label{fig-triangle}
\end{figure}

\else
This colouring~$c'$ can be extended into a $4$-edge-colouring~$c$ of~$G$ by
setting~$c(e)\coloneqq c'(e)$ if~$e\in E(G)\cap E(G')$ and
$c(v_iv_{i+1})\coloneqq c'(v_{i+2}u_{i+2})$ where~$i\in\{0,1,2\}$ is considered
modulo~$3$ and~$u_i$ is the neighbour of~$v_i$ not in~$\{v_{i+1},v_{i+2}\}$.
The obtained $4$-edge-colouring~$c$ yields no more medium edges in~$G$ than~$c'$
does in~$G'$.
\fi
\noindent
Consequently, we may assume that~$G$ is a simple bridgeless cubic
graph with no triangle.

It remains to deal with the case where~$G$ is a bridgeless cubic graph with no triangles.
Let~$F$ a $2$-factor of~$G$ and~$M$ the perfect matching
such that~$F=G-M$.
Because~$G$ admits no $3$-edge-colouring, we know that~$F$ contains at least two odd cycles.
In particular, there exists an edge in~$M$ that is not a chord of a cycle in~$F$.
Note also that cycles of~$G$ with length (at most)~$5$ have no chord, as~$G$ has no triangle.

If~$v\in V(G)$, we define~$C_v$ to be the cycle in~$F$ to which~$v$ belongs and~$v'$ to be
the unique neighbour of~$v$ in~$G$ such that~$vv'\in M$.
For every cycle~$C\in F$, there is a cyclic ordering~$\varphi_C$ of the
vertices in~$V(C)$, which we extend to the edges in~$M$ that are
incident to a vertex in~$C$: two edges~$e$ and~$e'$ in~$M$ that have each
exactly one end-vertex on a given cycle~$C\in F$ are \emph{consecutive} if
their end-vertices in~$C$ are consecutive with respect
to~$\varphi_C$. (Notice that chords are purposely excluded from this definition.)

An \emph{edge-selection} is a subset~$S$ of~$M$ with the following properties:
\begin{enumerate}
    \item every edge in~$S$ is incident to two different odd cycles in~$F$ (in particular,
        no edge in~$S$ is a chord of a cycle in~$F$);
    \item every cycle in~$F$ is incident to at most two edges in~$S$; and
    \item if a cycle~$C\in F$ is incident to two edges in~$S$, then these two edges are consecutive.
\end{enumerate}
Given an edge-selection~$S$ and a cycle~$C\in F$, the \emph{degree~$\deg_S(C)$ of~$C$ in~$S$}
is the number of edges incident to~$C$ that belong to~$S$, and hence~$\deg_S(C)\in\{0,1,2\}$.
If in addition~$C'\in F$, then~$C$ and~$C'$ are \emph{$S$-adjacent} if~$S$
contains an edge incident to both~$C$ and~$C'$. An \emph{$S$-component} of~$G$
is an inclusion-wise maximal subset~$K$ of~$F$ such that for every two distinct cycles~$C$ and~$C'$ in~$K$, there exists
a sequence~$(C_i)_{0\le i\le t}$ of cycles in~$K$ such that~$C_0=C$, $C_t=C'$ and for every~$i\in\{0,\dotsc,t-1\}$,
the cycles~$C_i$ and~$C_{i+1}$ are $S$-adjacent. 
The edges in~$S$ joining two cycles in~$K$
are said to be \emph{associated with~$K$}. This relation is a surjective mapping
from~$S$ to the set of all $S$-components of~$G$.
We need a last definition.  If~$K$ is an $S$-component, then we let~$G_K$ be the
multi-graph with vertex set~$K$ and~$x$ edges between~$C$ and~$C'$ where~$x\in\{0,1,2\}$
is the number of edges in~$S$ that are incident to both~$C$ and~$C'$ in~$G$.
It follows from the definitions that~$G_K$ is either a single vertex, or a
path, or a cycle, or two vertices joined by two parallel edges.

Among all edge-selections of maximum order, we choose one such that the number
of $S$-degree-$2$ cycles is as large as possible.
We construct a $4$-edge-colouring of~$G$ with the following properties:
\begin{itemize}
    \item every edge in~$M$ is coloured~$4$ and no other edge is coloured~$4$;
    \item an edge is coloured~$3$ only if it belongs to an odd cycle in~$F$;
    \item every odd cycle in~$F$ has exactly one edge coloured~$3$;
    \item every edge in~$S$ is adjacent to two edges coloured~$3$; and
    \item if an edge~$e$ in~$S$ is medium, then it is associated with an
        $S$-component~$K$ such that~$G_K$ is an odd cycle and~$e$ is the only
        medium edge associated with~$K$.
\end{itemize}
To see why such a $4$-edge-colouring exists,
start by colouring the edges of~$G$ that belong to~$M$ with~$4$. Next
colour the edges of every even cycle in~$F$ using~$\{1,2\}$. By~(2)
and~(3) every odd
cycle~$C\in F$ has an edge that is incident to all the edges in~$S$ incident
to~$C$: colour this edge with~$3$.  The remaining uncoloured edges span a
vertex-disjoint collection of paths, and we colour them using~$\{1,2\}$,
independently for each $S$-component~$K$. If~$G_K$ is not an odd cycle, then
one can ensure that no edge associated with~$K$ is medium. If~$G_K$ is
an odd cycle, then one can ensure that exactly one edge~$S$ associated
with~$K$ is medium.

Our goal is to demonstrate that the obtained $4$-edge-colouring of~$G$ contains at most~$4n/5$ medium edges,
where~$n$ is the number of vertices of~$G$.
We use a discharging argument to count the medium edges: we start by assigning a charge of~$1$ to each medium edge,
and thus throughout all the process the total charge in the graph~$G$ is precisely the number of medium edges.

We shall define a number of discharging rules: in the first ones, medium edges send charge to cycles in~$F$
to which they are incident. Later, some cycles in~$F$ will send some charge to other cycles in~$F$.
We apply the rules in order and analyse the global state of the charge in the
graph after one or more rules have been applied. At the end, we prove that for
each $S$-component~$K$ the sum of the charges of the cycles in~$K$
is at most~$\frac45$ times the number of vertices belonging to cycles in~$K$, which
implies the sought upper bound on the number of medium edges.

\begin{fact}\label{as-1}
    A cycle in~$F$ contains zero medium edge if it is even and~$3$ medium edges if it is odd.
\end{fact}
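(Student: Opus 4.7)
The proof plan is a straightforward case analysis on an edge $e$ belonging to a cycle $C$ of $F$, relying on the explicit structure of the $4$-edge-colouring constructed just before the fact. The key observation is that for any such $e$, the set $\sE(e)$ consists of the two neighbours of $e$ in $C$ together with the two $M$-edges incident to the endpoints of $e$, and the latter two edges are always coloured~$4$. So the colour~$4$ contributes exactly one element to $f(\sE(e))$, and the rest of the analysis reduces to understanding which colours appear on the two cycle-neighbours of $e$ in $C$.

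First I would handle the even case. If $C$ is even, then by construction its edges are $2$-coloured using~$\{1,2\}$ alternately, so each edge $e\in C$ has both of its cycle-neighbours coloured with the unique element of $\{1,2\}\setminus\{f(e)\}$. Hence $f(\sE(e))$ has exactly two elements, namely that colour together with~$4$, so $e$ is poor. This yields zero medium edges on an even cycle.

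Next I would handle the odd case. By construction, $C$ contains a unique edge $e_3$ coloured~$3$; removing $e_3$ from $C$ leaves a path of even length, which is then properly $2$-coloured with $\{1,2\}$. I would split into three subcases according to the position of $e$ relative to $e_3$. If $e=e_3$, then the two cycle-neighbours of $e_3$ are the two extremities of that path, which have colours~$1$ and~$2$ respectively (this is where the even parity of the path matters), giving $f(\sE(e))=\{1,2,4\}$, so $e$ is medium. If $e$ is adjacent to $e_3$ in $C$, then one of its cycle-neighbours is $e_3$ itself, coloured~$3$, and the other is coloured with the element of $\{1,2\}\setminus\{f(e)\}$, giving $f(\sE(e))=\{1,2,3,4\}\setminus\{f(e)\}$ of size~$3$, so $e$ is medium. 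Otherwise $e$ lies on the path strictly away from $e_3$, its two cycle-neighbours share the colour $\{1,2\}\setminus\{f(e)\}$ by the alternating pattern, and $f(\sE(e))$ has size~$2$, so $e$ is poor.

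There is essentially no obstacle here; the only thing to be careful about is the parity argument ensuring that the two endpoints of the even path obtained from $C\setminus\{e_3\}$ receive different colours, which is what forces $e_3$ itself to be medium rather than poor. Summing over the three subcases, the medium edges of an odd cycle are exactly $e_3$ and its two cycle-neighbours, totalling~$3$, which is the announced count.
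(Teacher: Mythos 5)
Your proof is correct and follows essentially the same route as the paper: a direct case analysis on the position of an edge in its cycle of~$F$, using that the $M$-edges are coloured~$4$ and that the $\{1,2\}$-colouring alternates along even cycles and along the path obtained by removing the edge coloured~$3$ from an odd cycle. If anything, you are slightly more explicit than the paper in checking (via the parity of that path) that the edge coloured~$3$ and its two cycle-neighbours are indeed medium, which the paper leaves implicit.
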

\begin{proof}
Indeed, if the edge~$e$ belongs to an even cycle, then its colour~$c$ belongs
    to~$\{1,2\}$, and each of its end-vertices is incident to an edge
    coloured~$4$, which belongs to~$M$, and an edge coloured~$3-c$, which
    belongs to the even cycle. So~$e$ is poor.  Let~$C=v_0\dotso v_{2k}$ is an
    odd cycle in~$F$ with~$v_0v_1$ being its only edge coloured~$3$. For
    each~$i\in\{2,\dotsc,2k-1\}$, let~$c_i\in\{1,2\}$ be the colour of the
    edge~$v_iv_{i+1}$. Then the edge~$v_iv_{i+1}$ is incident to two edges
    coloured~$4$ and two edges coloured~$3-c_i$, and hence~$v_iv_{i+1}$ is
    poor. Consequently, the only medium edges on~$C$ are~$v_0v_1$,
    $v_1v_2$ and~$v_{2k}v_0$.
\end{proof}

\noindent
Our first rule reads as follows.

\bigskip
\noindent
\textbf{(R0)} Every medium edge that belongs to a cycle~$C$ in~$F$ sends~$1$ to~$C$.

\bigskip
After applying rule~(R0), an edge has charge~$0$ except if it is a medium edge that belongs to~$M$,
in which case it has charge~$1$. In addition, a cycle in~$F$ has charge~$0$ if it is even
and charge~$3$ if it is odd. For our next rule, notice that if~$e$ is a medium edge that belongs to~$M$,
then~$e$ is adjacent to an edge coloured~$3$, which must belong to an odd cycle in~$F$.

\begin{figure}[!t]
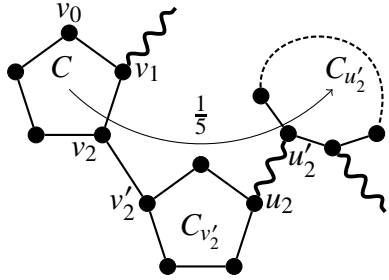
 
    \begin{center}
\begin{tikzgraph}
 \foreach \i in {0,1,2,3,4}{
    \draw (90+72*\i:.75cm) node[vertex] (a\i) {};
    \draw (90+72*\i:.75cm) node[vertex,xshift=1.75cm,yshift=-1.75cm] (b\i) {};
}
\foreach \i in {1,2,3,4} {
\draw (162+36*\i:1cm) node[vertex,xshift=3.5cm,yshift=.22cm] (c\i) {};
}
\foreach \i\j in {0/1,1/2,2/3,3/4,4/0} {
  \draw[edge] (a\i)--(a\j);
  \draw[edge] (b\i)--(b\j);
}
\foreach \i\j in {1/2,2/3,3/4} {
  \draw[edge] (c\i)--(c\j);
}
\draw[edge,dashed,dash pattern=on 2pt off 1pt] (c1)  ..controls (2.5,1) and (4.5,1).. (c4);
\draw[edge] (a3)--(b1);
\draw[mediums] (b4)--(c2);
\draw[mediums] (a4)--++(.7cm,.85cm);
\draw[mediums] (c3)--++(.7cm,-.85cm);
\node[above] at (a0) {$v_0$};
\node[right] at (a4) {$v_1$};
\node[below left=-.5mm and -.9mm] at (a3) {$v_2$};
\node[left] at (b1) {$v'_2$};
\node[right] at (b4) {$u_2$};
\node[below right=-.2mm and -1.5mm] at (c2) {$u'_2$};
\draw[->] (0,0) to[out=-45,in=-135] (3.5cm,0);
\node[above] at (1.75cm,-.75cm) {$\frac15$};
    \node at (-1mm,3mm) {$C$};
    \node[right] at (3.25cm,2mm) {$C_{u'_2}$};
    \path (b1)--(b4) node[below,midway] {$C_{v'_2}$};
\end{tikzgraph}
        \caption{Illustration of the discharging rule~(R4): $C$ and~$C_{v'_2}$
        both are cycles of length~$5$ and $S$-degree~$1$ while~$C_{u'_2}$ has
        $S$-degree~$2$; waved edges are those in~$S$ and one could
        have~$C_{v'_1}=C_{u'_2}$, that is, the two waved edges with only one
        end-vertex drawn could be the same edge.}\label{fig-R4}
    \end{center}
\end{figure}

\bigskip
\noindent
\textbf{(R1)} Let~$e$ be a medium edge that belongs to~$M$, and let~$C$ and~$C'$ be the two cycles in~$F$
to which~$e$ is incident, such that~$C$ is odd and the edge
coloured~$3$ on~$C$ is adjacent to~$e$. If~$C'$ is even, then~$e$
sends~$1/2$ to~$C'$ and~$1/2$ to~$C$. If~$C'$ is odd
its edge coloured~$3$ is adjacent to~$e$, then~$e$
sends~$1/2$ to each of~$C$ and~$C'$. If~$C'$ is odd and its edge coloured~$3$ is not adjacent to~$e$,
then~$e$ sends~$1$ to~$C$ and nothing to~$C'$.

\bigskip
After applying rule~(R1), every edge has charge~$0$. If~$C\in F$ is an even cycle,
then its charge is at most~$1/2\cdot|V(C)|$, which is less than~$4|V(C)|/5$.
In addition, if~$C\in F$ is an odd cycle, then one of the following occurs:
\begin{itemize}
    \item $C$ has~$S$-degree~$0$ and charge at most~$5$;
    \item $C$ has~$S$-degree~$1$ and charge at most~$4$; or
    \item $C$ has~$S$-degree~$2$ and charge at most~$7/2$.
\end{itemize}
Indeed, if~$C$ is incident to exactly one edge in~$S$, then this edge cannot be medium by the construction of
the $4$-edge-colouring. If~$C$ is incident to exactly two edges in~$S$, then at most one of them is medium,
in which case it sends~$1/2$ to~$C$.
It follows that the charge of~$C$ is at most than~$\frac45\cdot|V(C)|$ unless~$C$
has length~$5$ and $S$-degree~$0$.

The next step is to apply the
three following rules, the third one being illustrated in Figure~\ref{fig-R4}.

\bigskip
\noindent
\textbf{(R2)} If~$C$ is a cycle of length~$5$ in~$F$ with $S$-degree~$0$,
then~$C$ sends~$1/5$ to~$C_{v'}$ for each~$v\in V(C)$.

\bigskip
\noindent
\textbf{(R3)} Let~$C=v_0\dotso v_4$ be a cycle of length~$5$ in~$F$ of $S$-degree~$1$, with~$v_1v'_1$ being the unique edge in~$S$ incident to~$C$.
For each~$i\in\{0,2\}$, if~$C_{v'_i}$ is not a cycle of length~$5$ with $S$-degree~$1$,
then~$C$ sends~$1/5$ to~$C_{v'_i}$ through~$v_i$.

\bigskip
\noindent
\textbf{(R4)} Let~$C=v_0\dotso v_4$ be a cycle of length~$5$ in~$F$ of $S$-degree~$1$, with~$v_1v'_1$ being the unique edge in~$S$ incident to~$C$.
Let~$i\in\{0,2\}$. Suppose that~$C_{v'_i}$ is a cycle of length~$5$ of $S$-degree~$1$, written~$v'_iu_1u_2u_3u_4$ such that~$u_2$
is its unique vertex incident to an edge in~$S$.
If~$C_{u'_2}$ is a cycle of $S$-degree~$2$ then~$C$ sends~$1/5$ to~$C_{u'_2}$ through~$v_i$.

\bigskip
We now check that after applying~(R2)--(R4), for every $S$-component~$K$, the
sum of the charges of the cycles in the component is at most~$4/5$ times the
number of vertices belonging to cycles in~$K$ with equality only if~$K$
contains only cycles of length~$5$.
We first analyse the current charge of every cycle~$C\in F$.
If~$C$ is even, then it has charge at
most~$(1/2+1/5)\cdot\abs{V(C)}<\frac45\cdot\abs{V(C)}$.

If~$C$ is a cycle with $S$-degree~$0$ and length~$2k+1$, where~$k\ge2$, 
then~$C$ does not receive any charge.  Indeed, there cannot be an edge~$e$
in~$M$ incident to~$C$ and a cycle of length~$5$ and~$S$-degree~$0$,
as~$S\cup\{e\}$ would then contradict the maximality of~$S$. This shows
that~$C$ does not receive charge by~(R2). Moreover, if~$C$ would receive charge
by~(R3) then the definition of~(R3) would imply the existence of a
cycle~$C'=v_0v_1v_2v_3v_4$ in~$F$ of length~$5$ and~$S$-degree one such
that~$v_1v'_1\in S$ and~$C=C_{v'_0}$ (or~$C=C_{v'_2}$).
Consequently,~$S\cup\{v_0v'_0\}$ would contradict the maximality of~$S$.
Therefore, the final charge of~$C$ is at most~$5$ if~$k\ge3$, which is less
than~$4\cdot(2k+1)/5$. If~$k=2$ then by rule~(R2) the final charge of~$C$ is at most
$5-5\cdot\frac15=4=4\cdot(2k+1)/5$.

Let~$C$ be a cycle with~$S$-degree~$1$. 
Note that~$C$ can receive charge only because of~(R2) and~(R3). Moreover,~$C$
does not receive charge through its vertex incident to an edge in~$S$.
If~$C$ has length~$2k+1$ with~$k\ge3$, then
before applying~(R2)--(R4) the charge of~$C$ was at most~$4$ since the edge in~$S$ incident to~$C$
is not medium, and hence its final charge is at most $4+2k/5$, which is less
than~$4\cdot(2k+1)/5$ since~$k\ge3$.

\begin{figure}[!th]
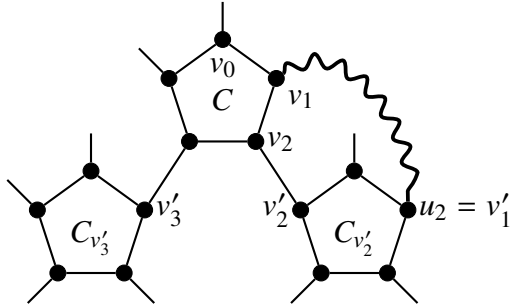
 
    \begin{center}
\begin{tikzgraph}
 \foreach \i in {0,1,2,3,4}{
    \draw (90+72*\i:.75cm) node[vertex] (a\i) {};
    \draw (90+72*\i:.75cm) node[vertex,xshift=1.75cm,yshift=-1.75cm] (b\i) {};
    \draw (90+72*\i:.75cm) node[vertex,xshift=-1.75cm,yshift=-1.75cm] (c\i) {};
}
\foreach \i\j in {0/1,1/2,2/3,3/4,4/0} {
  \draw[edge] (a\i)--(a\j);
  \draw[edge] (b\i)--(b\j);
  \draw[edge] (c\i)--(c\j);
}
\draw[edge] (a3)--(b1);
\draw[edge] (a2)--(c4);
\draw[edge] (a0)--++(0,5mm);
\draw[edge] (b0)--++(0,5mm);
\draw[edge] (c0)--++(0,5mm);
\draw[edge] (a1)--++(-4mm,4mm);
\draw[edge] (c1)--++(-4mm,4mm);
\draw[edge] (b2)--++(-4mm,-4mm);
\draw[edge] (c2)--++(-4mm,-4mm);
\draw[edge] (b3)--++(4mm,-4mm);
\draw[edge] (c3)--++(4mm,-4mm);
\node[below=1mm] at (a0) {$v_0$};
\node[below right] at (a4) {$v_1$};
\node[right] at (a3) {$v_2$};
\node[left] at (b1) {$v'_2$};
\node[right] at (b4) {$u_2=v'_1$};
\node[right] at (c4) {$v'_3$};
\draw[mediums] (a4) to[out=35,in=90] (b4);
    \path (a1)--(a4) node[below,midway] {$C$};
    \path (b1)--(b4) node[below,midway] (L) {$C_{v'_2}$};
\draw (L) node[xshift=-3.5cm] {$C_{v'_3}$};
\end{tikzgraph}
        \caption{If~$C$ and~$C_{v'_2}$ are $S$-adjacent cycles of length~$5$
        with~$S$-degree~$1$, then~$C_{v'_3}$ cannot be a cycle of length~$5$
        with~$S$-degree~$0$, for otherwise~$(S\setminus\{v_1v'_1\})\cup\{v_2v'_2,v_3v'_3\}$ would contradict the maximality
        of~$S$. Consequently,~$C_{v'_3}$, does not send charge to~$C$
        by~(R2).}\label{fig-add1}
    \end{center}
\end{figure}

If~$C$ has length~$5$, then let us write~$C=v_0\dotso v_4$ with~$v_1v'_1\in S$.
Recall that~$v_1v'_1$ cannot be medium because the $S$-component to which~$C$
belongs is not an odd cycle.  Furthermore,~$C$ can receive charge only by~(R2).
Observe that none of~$C_{v'_0}$ and~$C_{v'_2}$ is an odd cycle with $S$-degree~$0$
for otherwise adding the edge~$v_0v'_0$ or~$v_2v'_2$ to~$S$ would contradict
its choice. Therefore~$C$ can receive charge only through~$v_3$ or~$v_4$, for a
total of at most~$2/5$. It follows that if~$C$ sends~$1/5$ through each
of~$v_0$ and~$v_2$, due to rules~(R3) and~(R4), then its final charge surely is
at most~$\frac45\cdot\abs{V(G)}$.  Let us identify precisely when~$C$ sends
charge through~$v_2$, the case for~$v_0$ being identical.  If~$C_{v'_2}$ is not
a cycle of length~$5$ with $S$-degree~$1$, then~(R3) applies. So assume that~$C_{v'_2}$ is
a cycle of length~$5$ with $S$-degree~$1$, written~$v'_2u_1u_2u_3u_4$. None of~$u_1$
and~$u_4$ is incident to an edge in~$S$, for otherwise adding the
edge~$v_2v'_2$ to~$S$ would contradict its maximality.  So we can assume
without loss of generality that $u_2u'_2\in S$.  Now we observe that
if~$C_{u'_2}\neq C$, then~$C_{u'_2}$ cannot be a cycle of $S$-degree~$1$, for
otherwise~$(S\setminus\{u_2u'_2\})\cup\{v_2v'_2\}$ would contradict the fact
that~$S$, among all edge-selections of maximum order, creates the maximum number of cycles with $S$-degree~$2$. Therefore in this
case~$C_{u'_2}$ is a cycle of $S$-degree~$2$ and hence~$C$ sends~$1/5$ to~$C_{u'_2}$
by~(R4). It follows that the only case where~$C$ does not send charge
through~$v_2$ is when~$C_{u'_2} = C$ and hence~$v'_1=u_2$
(or~$v'_1=u_3$).
In this situation, we argue that~$C$ cannot receive charge
through~$v_3$. Indeed~$C$ can receive charge through~$v_3$ only by~(R2), which
applies if and only if~$C_{v'_3}$ is a cycle of length~$5$ and $S$-degree~$0$, as illustrated in Figure~\ref{fig-add1}. In this
case,~$(S\setminus\{v_1v'_1\})\cup\{v_2v'_2,v_3v'_3\}$ would contradict
the maximality of~$S$. Consequently, we proved that either~$C$ sends~$1/5$
through~$v_2$ or~$C$ receives nothing through~$v_3$.  By symmetry of the roles
played by~$v_2$ and~$v_0$, either~$C$ sends~$1/5$ through~$v_0$ or~$C$ receives
nothing through~$v_4$.  Since~$C$ can receive charge only through~$v_3$
and~$v_4$, we therefore conclude that the final charge of~$C$ is not greater
than its charge before applying~(R2)--(R4), that is~$4$.

It remains to deal with cycles with $S$-degree~$2$.
Let us write~$C=v_0\dotso v_{2k}$, where~$k\ge2$, with~$v_1v'_1\in S$
and~$v_2v'_2\in S$.
Suppose first that $G_K$ is not an odd cycle, where~$K$ is the $S$-component
to which~$C$ belongs. In particular, none of~$v_1v'_1$ and~$v_2v'_2$ is medium.
In this case, we show that the charge of~$C$ is at most~$\frac45\cdot(2k+1)$, with
equality only if~$k=2$.
Observe that, for each~$i\in\{1,2\}$,
the cycle~$C$ can receive some charge through~$v_i$ only if~$C_{v'_i}$ is a
cycle of length~$5$ and $S$-degree~$1$.  Further, according to~(R4), the
cycle~$C$ can receive at most~$2/5$ through~$v_i$, because only the two
vertices at distance two from~$v'_i$ on~$C_{v'_i}$ can be involved in an
application of~(R4).  As a result, it is enough to prove that if~$C$
receives~$2/5$ through~$v_i$, then~$C$ does not receive any charge
through~$v_{6-3i}$. More explicitly, if~$C$ receives~$2/5$ through~$v_1$
then~$C$ receives nothing through~$v_3$; and if~$C$ receives~$2/5$
through~$v_2$ then~$C$ receives nothing through~$v_0$. In total, the final
charge of~$C$ would then be at most~$3+(2k+1)/5$, which is at
most~$\frac45\cdot(2k+1)$ since~$k\ge2$, with equality if and only if~$k=2$.

\begin{figure}[!th]
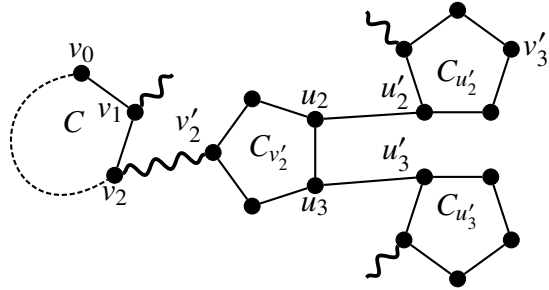
 
    \begin{center}
\begin{tikzgraph}
 \foreach \i in {0,3,4}{
    \draw (90+72*\i:.75cm) node[vertex] (a\i) {};
    \draw (180+72*\i:.75cm) node[vertex,xshift=2.5cm,yshift=-.3034cm] (b\i) {};
    \draw (90+72*\i:.75cm) node[vertex,xshift=5cm,yshift=.8385255cm] (c\i) {};
    \draw[rotate=180] (90+72*\i:.75cm) node[vertex,xshift=5cm,yshift=-1.235cm] (d\i) {};
}
 \foreach \i in {1,2}{
    \draw (180+72*\i:.75cm) node[vertex,xshift=2.5cm,yshift=-.3034cm] (b\i) {};
    \draw (90+72*\i:.75cm) node[vertex,xshift=5cm,yshift=.8385255cm] (c\i) {};
    \draw[rotate=180] (90+72*\i:.75cm) node[vertex,xshift=5cm,yshift=-1.235cm] (d\i) {};
}

\foreach \i\j in {3/4,4/0} {
  \draw[edge] (a\i)--(a\j);
  \draw[edge] (b\i)--(b\j);
  \draw[edge] (c\i)--(c\j);
  \draw[edge] (d\i)--(d\j);
}
\foreach \i\j in {0/1,1/2,2/3} {
  \draw[edge] (b\i)--(b\j);
  \draw[edge] (c\i)--(c\j);
  \draw[edge] (d\i)--(d\j);
}

\draw[edge,dashed,dash pattern=on 2pt off 1pt] (a0)  ..controls (-1.5,.5) and (-1,-1.5).. (a3);
\draw[edge] (b3)--(c2);
\draw[edge] (b2)--(d3);
\draw (c2) node[above left=-1mm and .5mm] {$u'_2$};
\draw (d3) node[above left=-.7mm and .3mm] {$u'_3$};
\node[above] at (a0) {$v_0$};
\node[left] at (a4) {$v_1$};
\node[below] at (a3) {$v_2$};
\node[above left] at (b0) {$v'_2$};
\node[above] at (b3) {$u_2$};
\node[below] at (b2) {$u_3$};
\node[right] at (c4) {$v'_3$};
\draw[mediums] (a3)--(b0);
\draw[mediums] (c1)--++(-5mm,5mm);
\draw[mediums] (d4)--++(-5mm,-5mm);
\draw[mediums] (a4)--++(5mm,5mm);
    \node at (-1mm,1mm) {$C$};
    \draw (2.5cm,-.3034cm) node {$C_{v'_2}$};
    \path (c1)--(c4) node[below,midway] {$C_{u'_2}$};
    \path (d1)--(d4) node[above,midway] {$C_{u'_3}$};
\end{tikzgraph}
        \caption{If the cycle~$C$ receives~$2/5$ through~$v_2$ by~(R4) and~$C_{u'_2}\neq C_{u'_3}$
        then $(S\setminus\{v_2v'_2\})\cup\{u_2u'_2,u_3u'_3\}$ contradicts the maximality of the edge-selection~$S$.}\label{fig-add2}
    \end{center}
\end{figure}

Let us establish the assertion above: assume without loss of generality that~$C$
receives~$2/5$ through~$v_2$ because of~(R4). Writing~$C_{v'_2}=v'_2u_1u_2u_3u_4$, we deduce that each
of~$C_{u'_2}$ and~$C_{u'_3}$ is a cycle of length~$5$ and $S$-degree~$1$. In
addition, by the definition of~(R4) for each~$i\in\{2,3\}$ the vertex
of~$C_{u'_i}$ incident to an edge in~$S$ is a neighbour of~$u'_i$
on~$C_{u'_i}$.  Suppose first that~$C_{u'_2}\neq C_{u'_3}$, as illustrated in Figure~\ref{fig-add2}.
Then~$(S\setminus\{v_2v'_2\})\cup\{u_2u'_2,u_3u'_3\}$ contradicts the maximality of~$S$.
If, on the contrary,~$C_{u'_2}=C_{u'_3}$, then without loss of generality we may write~$C_{u'_2}=w_0u'_2w_1w_2u'_3$
with~$w_0w'_0\in S$, as illustrated in Figure~\ref{fig-add3}. Now, if~$C$ receives charge through~$v_0$ then~$C_{v'_0}$ is a cycle of length~$5$
and $S$-degree~$0$ or~$1$.
In the latter case, we notice that the vertex of~$C_{v'_0}$
incident to an edge in~$S$ is consecutive to~$v'_0$ on~$C_{v'_0}$ and, consequently in both cases~$C_{v'_0}\neq C_{u'_2}$.
From this and the fact that~$v'_0\notin\{u'_2,u'_3\}$, we deduce
that in any case~$(S\setminus\{v_2v'_2\})\cup\{v_0v'_0,u_2u'_2\}$ contradicts the choice
of~$S$.

\begin{figure}[!th]
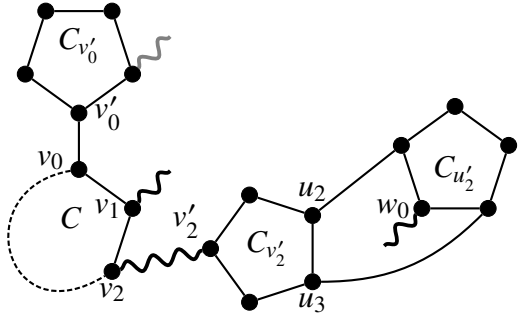
 
    \begin{center}
\begin{tikzgraph}
 \foreach \i in {0,3,4}{
    \draw (90+72*\i:.75cm) node[vertex] (a\i) {};
    \draw (180+72*\i:.75cm) node[vertex,xshift=2.5cm,yshift=-.3034cm] (b\i) {};
    \draw (90+72*\i:.75cm) node[vertex,xshift=5cm,yshift=.8385255cm] (c\i) {};
    \draw[rotate=180] (90+72*\i:.75cm) node[vertex,yshift=2.25cm] (d\i) {};
}
 \foreach \i in {1,2}{
    \draw (180+72*\i:.75cm) node[vertex,xshift=2.5cm,yshift=-.3034cm] (b\i) {};
    \draw (90+72*\i:.75cm) node[vertex,xshift=5cm,yshift=.8385255cm] (c\i) {};
    \draw[rotate=180] (90+72*\i:.75cm) node[vertex,yshift=2.25cm] (d\i) {};
}

\foreach \i\j in {3/4,4/0} {
  \draw[edge] (a\i)--(a\j);
  \draw[edge] (b\i)--(b\j);
  \draw[edge] (c\i)--(c\j);
  \draw[edge] (d\i)--(d\j);
}

\foreach \i\j in {0/1,1/2,2/3} {
  \draw[edge] (b\i)--(b\j);
  \draw[edge] (c\i)--(c\j);
  \draw[edge] (d\i)--(d\j);
}

\draw[edge,dashed,dash pattern=on 2pt off 1pt] (a0)  ..controls (-1.5,.5) and (-1,-1.5).. (a3);
\draw[edge] (b3)--(c1);
\draw[edge] (a0)--(d0);
\draw[edge] (b2) to[out=0, in=-135] (c3);
\node[above left=-1.5mm and .5mm] at (a0) {$v_0$};
\node[right=.5mm] at (d0) {$v'_0$};
\node[left] at (a4) {$v_1$};
\node[below] at (a3) {$v_2$};
\node[above left] at (b0) {$v'_2$};
\node[above=.5mm] at (b3) {$u_2$};
\node[below] at (b2) {$u_3$};
\node[left] at (c2) {$w_0$};
\draw[mediums] (a3)--(b0);
\draw[mediums] (c2)--++(-5mm,-5mm);
\draw[mediums,gray] (d1)--++(5mm,5mm);
\draw[mediums] (a4)--++(5mm,5mm);
    \node at (-1mm,1mm) {$C$};
    \draw (2.5cm,-.3034cm) node {$C_{v'_2}$};
    \path (c1)--(c4) node[below,midway] {$C_{u'_2}$};
    \path (d1)--(d4) node[above,midway] {$C_{v'_0}$};
\end{tikzgraph}
        \caption{If the cycle~$C$ receives~$2/5$ through~$v_2$ by~(R4) and~$C_{u'_2}=C_{u'_3}$, then~$C$ cannot
        receive~$1/5$ through~$v_0$ by~(R2) or~(R3), for otherwise~$(S\setminus\{v_2v'_2\})\cup\{v_0v'_0,u_2u'_2\}$
        would contradict the maximality of the edge-selection~$S$. (The gray waved edge belongs to~$S$ only if~$C_{v'_0}$
        has $S$-degree~$1$.)}\label{fig-add3}
    \end{center}
\end{figure}

It remains to deal with the case where~$C$ belongs to an $S$-component~$K$
such that~$G_K$ is an odd cycle. In this case, we prove the sum of the charges
of all cycles in~$K$ to be less than~$\frac45$ times the number of vertices belonging to
cycles in~$K$.
Every cycle in~$K$ is incident to exactly two edges in~$S$. It follows
that none of these cycles receive charge by~(R4).
Moreover, exactly one edge in~$S$ associated with~$K$ is medium. Consequently,
if~$C\in K$ is not incident to a medium edge that belongs to~$S$, then the final
charge of~$C$ is at most~$3+\frac15\cdot(\abs{V(C)}-2)$. If~$C$ is one of the two cycles
incident to the medium edge in~$S$ associated with~$K$, then the final charge of~$C$
is at most~$\frac72+\frac15\cdot(\abs{V(C)}-2)$. It now suffices to sum these quantities
over the cycles in~$K$: let us write~$K=\{C_1,\dotsc,C_t\}$ where~$t$ is an odd number
at least~$3$. Setting~$\ell_i\coloneqq\abs{V(C_i)}$ for
each~$i\in\{1,\dotsc,t\}$, the sum of the final charges of the cycles in~$K$ is
at most
\[
    3t+1+\sum_{i=1}^{t}\frac{\ell_i-2}{5} = 1+\frac{1}{5}\left(13t+\sum_{i=1}^{t}\ell_i\right).
\]
It only remains to show that this last quantity is less than~$\frac45\cdot\sum_{i=1}^{t}\ell_i$,
that is,
\begin{equation}\label{eq-last}
5+13t < 3\cdot\sum_{i=1}^{t}\ell_i.
\end{equation}
Since each cycle in~$F$ has length at least~$5$, one has~$\sum_{i=1}^t\ell_i\ge 5t$,
and hence~\eqref{eq-last} holds because~$t\ge3$.

Looking at the above inequalities, we observe that as soon as~$F$ contains a cycle~$C$
of length different from~$5$, then the number of medium edges is less
than~$\frac45\cdot\abs{V(G)}$.
Therefore, the number of medium edges obtained is strictly
less than~$\frac45\cdot\abs{V(G)}$ unless~$F$ contains only cycles of length~$5$.
As it turns out, it has been proved~\cite{DMP08} that every connected bridgeless
cubic graph different from the Petersen graph admits a $2$-factor containing a cycle
of length different from~$5$.\footnote{See also~\cite{DK} for a different and
short argument.} This concludes the proof of Theorem~\ref{thm-main}.

\section{Further work}
We point out that, using more involved discharging rules and a lengthier
analysis, one can show that there exists a positive~$\varepsilon$ (which we did not
try to optimise) such that for every connected bridgeless cubic graph~$G$
different from the Petersen graph, there exists a $4$-edge-colouring yielding
at most~$(4/5-\varepsilon)\abs{V(G)}$ medium edges. 

It seems stimulating to try and obtain upper bounds for the least possible number
of medium edges in a $k$-edge-colouring of a bridgeless cubic graph. As we saw,
this number is~$0$ if~$k\ge7$ and at most~$\frac45\cdot\abs{V(G)}$ if~$k=4$.
Since the Petersen colouring conjecture states that this number should be~$0$
when~$k=5$, can one obtain at least a sub-linear (in the number of vertices)
upper bound in this case? What can be proved when~$k=6$?

\begin{bibdiv} 
\begin{biblist} 

    \bib{And92}{article}{
        author = {Andersen, L.~D.},
        title = {The strong chromatic index of a cubic graph is at most 10},
        journal = {Discrete Math.},
        volume = {108},
        number = {1--3},
        pages = {231--252},
        year = {1992},
    }

    \bib{Bil15}{thesis}{
        author = {B{\'i}lkov\'a, Hana},
        title = {Variants of Petersen coloring for some graph classes},
        date = {2015},
        organization = {Charles University in Prague},
        note = {Master Thesis},
    }

    \bib{DMP08}{report}{
        author = {DeVos, M.},
        author = {Mkrtchyan, Vahan V.},
        author = {Petrosyan, Samvel S.},
        title = {$5$-cycles and the Petersen graph},
        eprint = {arXiv:0801.3714v1},
        url = {https://arxiv.org/abs/0801.3714v1},
        status = {unpublished},
    }

    \bib{FoJo83}{article}{
        author={Fouquet, J.-L.},
        author={Jolivet, J.-L.},
        title={Strong edge-colorings of graphs and applications to multi-{$k$}-gons},
        journal={Ars Combin.},
        volume={16},
        number={A},
        date={1983},
        pages={141--150},
    }

    \bib{KaSe11}{misc}{
        author = {Kaiser, T.},
        author = {Sereni, J.-S.},
        status = {unpublished},
    }

    \bib{HaSt14}{article}{
        author = {Hägglund, J.},
        author = {Steffen, E.},
        title = {Petersen-colorings and some families of snarks},
        journal = {Ars Math. Contemp.},
        volume = {7},
        pages = {161--173},
        year = {2014},
    }

    \bib{HHT93}{article}{
        author = {Horák, P.},
        author = {He, Q.},
        author = {Trotter, W.~T.},
        title = {Induced matchings in cubic graphs},
        journal = {J. Graph Theory},
        volume = {17},
        number = {2},
        date = {1993},
        pages = {151--160},
    }

    \bib{Jae85}{article}{
        author={Jaeger, F.},
        title={On five-edge-colorings of cubic graphs and nowhere-zero flow problems},
        journal={Ars Combin.},
        volume={20},
        number={B},
        date={1985},
        pages={229--244},
    }

    \bib{DK}{misc}{
        author = {Král', D.},
        note = {{P}ersonnal communication to the second author; see~\url{http://lbgi.fr/~sereni/Lectures/AG_Fall09/ag09_3.pdf}},
        accessdate = {\today},
        url = {http://lbgi.fr/~sereni/Lectures/AG_Fall09/ag09_3.pdf},
    }

    \bib{MaSk05}{article}{
        author = {Máčajová, E.},
        author = {Škoviera, M.},
        title = {Fano colourings of cubic graphs and the Fulkerson conjecture},
        journal = {Theoret. Comput. Sci.},
        volume = {349},
        number = {1},
        year =  {2005},
        pages = {112--120},
    }

    \bib{NeSa08}{article}{
        author = {Nešetřil, J.},
        author = {Šámal, R.},
        title = {On tension continuous mappings},
        journal = {European J. Combin.},
        volume = {29},
        year = {2008},
        number = {4}, 
        pages = {1025--1054},
    }

    \bib{Sam11}{article}{
        author = {Šámal, R.},
        title = {New approach to Petersen coloring},
        journal = {Electr. Notes Discrete Math.},
        volume = {38},
        pages = {755--760},
        year = {2011},
    }

\end{biblist} 
\end{bibdiv}

\end{document}